\newcommand{\m}{\mathbf{m}}
\newcommand{\QQ}{\mathcal{Q}}
\newcommand{\K}{\mathcal{K}}
\newcommand{\M}{\mathcal{M}}
\newcommand{\B}{\mathcal{B}}
\renewcommand{\P}{\mathcal{P}}
\newcommand{\od}[1][]{\bigl([-1,1]#1\bigr)}
\newcommand{\oda}{\od[,a]}
\newcommand{\odb}{\od[,b]}
\newcommand{\tmu}{\widetilde{\mu}}
\renewcommand{\dx}[1][x]{\mathrm{d}#1}
\DeclareMathOperator{\ext}{ext}
\DeclareMathOperator{\kw}{Quad_+}
\DeclareMathOperator{\D}{Disc}
\author{Teresa Rajba}
\address{\SWaddr}
\email{\color{blue}trajba@ath.bielsko.pl}
\author{\SW}
\email{\SWmail}
\title{Extremal measures with prescribed moments}
\keywords{%
 Choquet Representation Theorem,
 convex functions of higher order,
 extreme point of a convex set,
 extremalities in the approximate integration,
 probability measure,
 quadrature%
}
\subjclass[2010]{Primary: 28A25. Secondary: 26A51, 28A33, 41A55, 52A05, 65D30, 65D32.}
\date{}
\begin{document}
\begin{abstract}
 In the approximate integration some inequalities between the quadratures and the integrals approximated by them are called \emph{extremalities}. On the other hand, the set of all quadratures is convex. We are trying to find possible connections between extremalities and extremal quadratures (in the sense of extreme points of a~convex set). Of course, the quadratures are the integrals \wrt~discrete measures and, moreover, a~quadrature is extremal if and only if the associated measure is extremal. Hence the natural problem arises to give some description of extremal measures with prescribed moments in the general (not only discrete) case. In this paper we deal with symmetric measures with prescribed first four moments. The full description (with no symmetry assumptions, and/or not only four moments are prescribed and so on) is far to be done.
\end{abstract}
\maketitle

\section{Introduction}

The second-named author considered in~\cite{Was10MIA} so-called \emph{extremalities} in the approximate integration.

Let $P_n$ be the $n$-th degree Legendre polynomial given by the Rodrigues formula
\[
 P_n(x)=\frac{1}{2^n n!}\frac{\dx[]^n}{\dx^n}(x^2-1)^2\,.
\]
 Then $P_n$ has $n$ distinct roots $x_1,\dots,x_n\in(-1,1)$. The $n$-point Gauss--Legendre quadrature is the positive linear functional on $\R^{[-1,1]}$ given by
\[
 \G_n[f]=\sum_{i=1}^nw_if(x_i)
\]
with the weights
\[
 w_i=\frac{2(1-x_i^2)}{(n+1)^2P_{n+1}^2(x_i)}\,,\quad i=1,\dots,n\,.
\]
The $(n+1)$-point Lobatto quadrature is the functional
\[
 \Lob_{n+1}[f]=v_1f(-1)+v_{n+1}f(1)+\sum_{i=2}^n v_if(y_i)\,,
\]
where $y_2,\dots,y_{n}\in(-1,1)$ are (distinct) roots of $P_n'$ and
\[
 v_1=v_{n+1}=\frac{2}{n(n+1)}\,,\quad v_i=\frac{2}{n(n+1)P_n^2(y_i)}\,,\quad i=2,\dots,n\,.
\]
For these forms of quadratures as well as for another quadratures appearing in this paper see for instance~\cite{Hil87}.

Recall that a~continuous function $f:[-1,1]\to\R$ is $n$-\emph{convex} ($n\in\N$), if and only if $f$ is of the class $\C^{n-1}$ and the derivative $f^{(n-1)}$ is convex (\cf~\cite[Theorem~15.8.4]{Kuc09}). For the needs of this paper it could be regarded as a~definition of $n$-convexity.

Let $\T$ be a~positive linear functional defined (at least) on a~linear subspace of $\R^{[-1,1]}$ generated by the cone of $(2n-1)$-convex functions (\ie~$\T[f]\xge 0$ for $f\xge 0$). Assume that $\T$ is exact on polynomials of order $2n-1$, \ie~$\T[p]=\int_{-1}^1 p(x)\dx$ for any polynomial~$p$ of order $2n-1$. It was proved in~\cite[Theorem~14]{Was10MIA} that the inequality
\begin{equation}\label{eq:extr}
 \G_n[f]\xle\T[f]\xle\Lob_{n+1}[f]
\end{equation}
holds for any $(2n-1)$-convex function $f:[-1,1]\to\R$. Then the functionals $\G_n$ and $\Lob_{n+1}$ restricted to the cone of $(2n-1)$-convex functions are minimal and maximal, respectively, among all positive linear functionals defined (at least) on $(2n-1)$-convex functions, which are exact on polynomials of order $2n-1$. In~\cite[Theorem~15]{Was10MIA} there is a~counterpart of the above result for $2n$-convex functions with Radau quadratures in the role of the minimal and maximal operators.

Studying the results of this kind the following problem seems to be natural. Some quadrature operators are extremal in the sense of inequalities like~\eqref{eq:extr}. On the other hand, the set of all quadratures which are exact on polynomials of some given order is convex. Then it could be interesting to find its extreme points looking for the possible connections between extremalities in the approximate integration and the extreme points of convex sets. In particular, are $\G_n$ and $\Lob_{n+1}$ extreme points of the above mentioned set? If the answer is positive, are they the only extreme points, or there exist another ones?

This is the starting point for our considerations. We will observe that the extreme points in the set of all quadratures exact on polynomials of prescribed order could be determined with the aid of~\cite[Theorem~6.1, p.~101]{KarStu66}. Next we shall investigate the extreme points the set of all positive linear operators defined on $\C[-1,1]$ with prescribed moments. Our research is far from being complete. Actually we are able to give a~full description of the extreme points of the set of symmetric operators with four prescribed moments, \ie~$(m_0,m_1,m_2,m_3)=(1,0,b^2,0)$.

\section{Extremal quadratures}

Let $D$ be a~convex subset of a~linear space. Recall that $x\in D$ is the extreme point of~$D$, if $x$ is not the "interior" point of any segment with endpoints in~$D$, \ie~$x=tu+(1-t)v$ for some $u,v\in D$ and $t\in[0,1]$ implies that $x=u=v$. The set of all extreme points of a~set $D$ will be denoted by $\ext D$.

A~\emph{quadrature} on $[-1,1]$ is the linear functional defined on $\R^{[-1,1]}$ by the formula
\[
 \QQ[f]=\sum_{k=1}^{n(\QQ)}w^{\QQ}_kf(\xi^{\QQ}_k)\,,
\]
where $n(\QQ)\in\N$, $\xi^{\QQ}_k\in[-1,1]$ are the \emph{nodes} and $w^{\QQ}_k$ are the \emph{weights} of $\QQ$ (for $k=1,\dots,n(\QQ)$). If all the weights of $\QQ$ are positive, then $\QQ$ is a~\emph{positive quadrature}, \ie\linebreak $\QQ[f]\xge 0$ for $f\xge 0$. Positive quadratures are often used in the approximate integration.


Let $e_k(x)=x^k$, $k=0,1,\dots,n$. Fix a~vector $\mathbf{\m}=(m_0,m_1,\dots,m_n)\in\R^{n+1}$. Let $\kw(\m)$ be the~set of all positive quadratures $\QQ$ with moments $\QQ[e_k]=m_k$, $k=0,1,\dots,n$.  In this section we determine the extreme points of the (convex) set $\kw(\m)$. In particular, if $\m$ is a~vector of integral moments $m_k=\int_{-1}^1x^k\dx$, $k=0,1,\dots,n$, we will obtain a connection between the extremalities in the approximate integration and extreme points of a~set of positive quadratures, which are exact on polynomials of a~given order~$n$.

Every positive quadrature $\QQ$ could be written in the form
\[
 \QQ[f]=\int_{[-1,1]}f\dx[\mu_{\QQ}]\,\quad\text{for } \mu_{\QQ}=\sum_{k=1}^{n(\QQ)}w^{\QQ}_k\delta_{\xi^{\QQ}_k},
\]
where $\delta_x$ stands for a~Dirac measure concentrated at~$x$. By the Riesz--Markov Theorem (\cf~\cite[p.~458]{RoyFit10}) the measure $\mu_{\QQ}$ in the above representation is uniquely determined. Furthermore, if $\QQ\in\kw(\m)$, then $\m$ is the moment vector of the~measure $\mu_{\QQ}$.

Denote by $\D(\m)$ the set of all discrete measures $\mu$ on $\B\bigl([-1,1]\bigr)$ with moments
\[
 \int_{[-1,1]}e_k\dx[\mu]=m_k\,,\quad k=0,1,\dots,n\,.
\]
The set $\D(\m)$ is convex.

\begin{thm}
 A~quadrature $\QQ\in\kw(\m)$ is an extreme point of $\kw(\m)$ if and only if $n(\QQ)\xle n+1$.
\end{thm}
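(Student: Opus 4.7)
The plan is to work with the uniquely determined discrete measure $\mu_\QQ=\sum_{i=1}^{n(\QQ)}w^\QQ_i\delta_{\xi^\QQ_i}$ attached to $\QQ$, assuming throughout that the nodes $\xi^\QQ_1,\dots,\xi^\QQ_{n(\QQ)}$ are pairwise distinct (which amounts to taking the reduced representation) and that the weights $w^\QQ_i$ are strictly positive. The moment conditions $\QQ[e_k]=m_k$ then read
\[
 \sum_{i=1}^{n(\QQ)} w^\QQ_i(\xi^\QQ_i)^k=m_k,\qquad k=0,1,\dots,n,
\]
and the object that will govern everything is the rectangular Vandermonde matrix with entries $(\xi^\QQ_i)^k$.

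For the sufficiency, I would suppose $n(\QQ)\le n+1$ and take any decomposition $\QQ=\tfrac12(\QQ_1+\QQ_2)$ with $\QQ_1,\QQ_2\in\kw(\m)$. From $\mu_\QQ=\tfrac12(\mu_{\QQ_1}+\mu_{\QQ_2})$ together with the positivity of $\mu_{\QQ_1}$ and $\mu_{\QQ_2}$, each $\mu_{\QQ_j}$ is supported in $\{\xi^\QQ_1,\dots,\xi^\QQ_{n(\QQ)}\}$, so I can write $\mu_{\QQ_j}=\sum_i a^{(j)}_i\delta_{\xi^\QQ_i}$ with $a^{(j)}_i\ge 0$. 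The equalities $\QQ_j[e_k]=m_k=\QQ[e_k]$ for $k=0,\dots,n$ then form a homogeneous linear system in the differences $a^{(j)}_i-w^\QQ_i$; its $n(\QQ)\times n(\QQ)$ submatrix corresponding to $k=0,\dots,n(\QQ)-1$ is Vandermonde in the distinct nodes $\xi^\QQ_i$, hence invertible, forcing $a^{(j)}_i=w^\QQ_i$ and therefore $\QQ_1=\QQ_2=\QQ$.

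For the necessity, I would suppose $n(\QQ)\ge n+2$ and manufacture a nontrivial weight perturbation preserving the prescribed moments. The linear map $\R^{n(\QQ)}\to\R^{n+1}$ sending $(\epsilon_i)$ to $\bigl(\sum_i\epsilon_i(\xi^\QQ_i)^k\bigr)_{k=0}^n$ has rank at most $n+1<n(\QQ)$, so its kernel contains a nonzero vector $(\epsilon_i)$. Since every $w^\QQ_i$ is strictly positive, for $t>0$ small enough we have $w^\QQ_i\pm t\epsilon_i>0$ for each $i$, and then the quadratures $\QQ_\pm$ sharing the nodes of $\QQ$ with weights $w^\QQ_i\pm t\epsilon_i$ both lie in $\kw(\m)$, are distinct, and satisfy $\QQ=\tfrac12(\QQ_++\QQ_-)$. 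Hence $\QQ$ is not an extreme point.

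The only subtle point is the interpretation of $n(\QQ)$: the argument tacitly uses $n(\QQ)=\lvert\{\xi^\QQ_1,\dots,\xi^\QQ_{n(\QQ)}\}\rvert$, i.e.\ the reduced representation, for otherwise the statement is vacuously broken by artificially duplicating a node. Modulo this bookkeeping the proof reduces on both sides to the invertibility of a square Vandermonde matrix in pairwise distinct nodes, and I do not anticipate any real obstacle.
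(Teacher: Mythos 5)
Your proof is correct, but it takes a genuinely different route from the paper's. The paper disposes of the theorem in two lines: it observes that $\QQ$ is an extreme point of $\kw(\m)$ if and only if the associated measure $\mu_{\QQ}$ is an extreme point of the set $\D(\m)$ of discrete measures with the prescribed moments, and then cites Karlin--Studden (Theorem~6.1, p.~101), according to which the extreme points of $\D(\m)$ are exactly the measures concentrated on at most $n+1$ points. You instead prove the relevant special case of that citation from scratch: sufficiency via the invertibility of the square Vandermonde matrix in at most $n+1$ pairwise distinct nodes, which pins down the weights of any two summands supported on those nodes; necessity via a nonzero vector in the kernel of the $(n+1)\times n(\QQ)$ moment map, used to perturb the weights symmetrically while keeping them strictly positive. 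Both steps are sound; restricting to midpoint decompositions is the standard equivalent formulation of extremality, and your caveat that $n(\QQ)$ must be read as the number of distinct nodes carrying positive weight is a point the paper also leaves implicit (its citation speaks of measures \emph{concentrated} on at most $n+1$ points, which is the intrinsic formulation). What the paper's route buys is brevity and generality, since the Karlin--Studden result applies to arbitrary Tchebycheff systems rather than just the monomials $e_0,\dots,e_n$; what yours buys is a self-contained elementary argument that makes the underlying mechanism --- Vandermonde nonsingularity on one side, a rank deficit on the other --- completely explicit.
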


\begin{proof}
 A quadrature $\QQ\in\kw(\m)$ is an extreme point of $\kw(\m)$ if and only if the measure $\mu_{\QQ}\in\D(\m)$ is the extreme point of $\D(\m)$. By virtue of \cite[Theorem~6.1, p.~101]{KarStu66} the extreme measures in $\D(\m)$ are exactly the measures concentrated on at most $n+1$ points. This finishes the proof.
\end{proof}

For $m_k=\int_{-1}^1x^k\dx$, $k=0,1,\dots,2n-1$, we obtain immediately that the $n$-point Gauss quadrature $\G_n$, as well as the $(n+1)$-point Lobatto quadrature, are the extreme points of $\kw(\m)$. Nevertheless, there are infinitely many other extremal quadratures in this set. For instance, all Gauss quadratures and with $p$ nodes ($p\in\{n,\dots,2n\}$), also all Lobatto quadratures with the number of nodes $p\in\{n+1,\dots,2n\}$, are the extreme points of $\kw(\m)$.

\section{Extremal measures}

To find extremal quadratures we needed to know the extreme points of a~set of all discrete measures with finite spectrum. In this section we consider all finite symmetric measures on $\B\od$ with prescribed moments $(1,0,b^2,0)$.

Let $\M\od$ be the~set of all finite measures on $\B\od$. Let $\M^0\od$ be a~subset of $\M\od$ consisting of measures, which are symmetric \wrt~$0$, \ie~$\mu\in\M^0\od$ if and only if $\mu\in\M\od$ and $\mu(-B)=\mu(B)$, $B\in\B\od$.

Let $\P\od$, $\P(\R)$ be the sets of probability measures on $\B\od$ and $\B(\R)$, respectively. Denote

\[
 \P^0\od=\M^0\od\cap\P(\R)\,. 
\]

For a~non-zero measure $\mu\in\M\od$ define the measure $\tmu$ by
\begin{equation*}
 \tmu(B)=\frac{\mu(B)}{\mu\od}\,,\quad B\in\B\od\,.
\end{equation*}

Let $0<a<1$ and $\M^0\oda$ be the set of all measures $\mu\in\M^0\od$ satisfying
\begin{equation}\label{eq:2}
 \int_{-1}^1 x^2\mu(\dx)=a^2\mu\od\,.
\end{equation}

Set
\[
 \P^0\oda=\M^0\oda\cap\P\od\,.
\]
Clearly
\begin{equation*}
 \int_{-1}^1 x^2\mu(\dx)=a^2\,.
\end{equation*}
for any $\mu\in\P^0\oda$. Moreover,
\[
 \mu\in\M^0\oda\iff \tmu\in\P^0\oda\,,
\]
whenever $\mu$ is a~non-zero measure.

Obviously, the set $\P^0\odb$ is the set consisting of all finite symmetric measures on $\B\od$ with prescribed moments $(1,0,b^2,0)$.


We start with two lemmas. The proof of the first of them is rather standard ad simple, so we omit it.
\begin{lem}\label{lem:1}
 Let $\mu\in\P^0\od$ and $m_2=\displaystyle\int_{-1}^1 x^2\mu(\dx)$ for $0<a<1$.
 \begin{enumerate}[\upshape a)]
  \item
   If $\mu$ is concentrated on~$[-a,a]$, then $m_2\xle a^2$.
  \item
   If $\mu$ is concentrated on~$[-1,-a]\cup[a,1]$, then $m_2\xge a^2$.
  \item
   If $\mu=\dfrac{\delta_{-a}+\delta_a}{2}$, \ie~$\mu$ is concentrated on the set $\{-a,a\}$, then $m_2=a^2$.
  \item 
   If $\mu$ is concentrated on $[-a,a]$ and $\mu\bigl((-a,a)\bigr)>0$, then $m_2<a^2$.
  \item 
   If $\mu$ is concentrated on $[-1,-a]\cup[a,1]$ and $\mu\bigl((-1,-a)\cup(a,1)\bigr)>0$, then $m_2>a^2$.
  \item
   Suppose that $\mu$ is concentrated on $[-a,a]$. Then $m_2=a^2$ if and only if $\mu$ is concentrated on $\{-a,a\}$.
  \item
   Suppose that $\mu$ is concentrated on $[-1,-a]\cup[a,1]$. Then $m_2=a^2$ if and only if $\mu$ is concentrated on $\{-a,a\}$. 
 \end{enumerate}
\end{lem}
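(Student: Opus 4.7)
The plan is to treat each part by direct pointwise estimation of the integrand $x^2$ on the support of $\mu$, then bootstrap the strict versions (d)--(g) from the non-strict versions (a)--(c) by localizing the mass.

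For parts (a)--(c) I would argue as follows. In (a), the support of $\mu$ lies in $[-a,a]$, so $x^2\xle a^2$ $\mu$-almost everywhere; integrating against $\mu$ and using $\mu\od=1$ gives $m_2\xle a^2$. Part (b) is symmetric: on $[-1,-a]\cup[a,1]$ we have $x^2\xge a^2$, yielding $m_2\xge a^2$. Part (c) is a direct computation: $m_2=\tfrac{1}{2}((-a)^2+a^2)=a^2$.

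For parts (d) and (e) I would sharpen the argument. In (d), split $\mu=\mu|_{\{-a,a\}}+\mu|_{(-a,a)}$. On $(-a,a)$ the function $a^2-x^2$ is strictly positive, so
\[
 a^2-m_2=\int_{\{-a,a\}}(a^2-x^2)\,\mu(\dx)+\int_{(-a,a)}(a^2-x^2)\,\mu(\dx)=\int_{(-a,a)}(a^2-x^2)\,\mu(\dx)>0
\]
because $\mu\bigl((-a,a)\bigr)>0$ and the integrand is bounded below by a positive constant on any compact subinterval $[-a+\varepsilon,a-\varepsilon]$ which carries positive $\mu$-mass for small enough $\varepsilon$. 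Part (e) is entirely analogous, replacing $a^2-x^2$ by $x^2-a^2$ and using positivity on $(-1,-a)\cup(a,1)$.

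For the characterizations (f) and (g), one direction is contrapositive: if $\mu$ is supported in $[-a,a]$ and fails to be concentrated on $\{-a,a\}$, then $\mu\bigl((-a,a)\bigr)>0$, so (d) gives $m_2<a^2$. Conversely, if $\mu$ is concentrated on $\{-a,a\}$ then symmetry forces $\mu(\{-a\})=\mu(\{a\})=\tfrac{1}{2}$, and (c) gives $m_2=a^2$. Part (g) is the mirror image using (e). No step here is really delicate; the only mild care needed is the elementary measure-theoretic remark that positivity of $\mu$ on the open set $(-a,a)$ (resp.\ $(-1,-a)\cup(a,1)$) yields positivity on some compact subset where $a^2-x^2$ (resp.\ $x^2-a^2$) is bounded away from zero, which is why the authors call the proof standard and omit it.
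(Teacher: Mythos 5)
Your proof is correct, and it supplies precisely the standard argument the paper omits (the authors state only that the proof is ``rather standard and simple, so we omit it''): pointwise comparison of $x^2$ with $a^2$ on the support for the non-strict inequalities, and localization of positive mass to a compact set where $|a^2-x^2|$ is bounded away from zero for the strict ones. Nothing further is needed.
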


Let $\mu\in\M\od$, $E\in\B\od$. Then $\mu_E$ stands for the restriction of $\mu$ to the set $E$, \ie~$\mu_E(B)=\mu(B\cap E)$, $B\in\B\od$. Similarly to \eqref{eq:2}, for any $E\in\B\od$ with $\mu(E)>0$, we put
\[
 \tmu_E(B)=\frac{\mu_E(B)}{\mu(E)}\,,\quad B\in\B\od\,.
\]

\begin{lem}\label{lem:2}
 Let $0<a<1$ and $\mu\in\M^0\oda$ be a~non-zero continuous measure, \ie~$\mu\bigl(\{x\}\bigr)=0$ for all $x\in[-1,1]$. Then
 \begin{enumerate}[\upshape (i)]
  \item
   $\mu\bigl((-a,a)\bigr)>0$ and $\mu\bigl((-1,-a)\cup(a,1)\bigr)>0$;
  \item
   there exists a~$\xi_0\in(a,1)$ \st~$\mu\bigl((a,\xi_0)\bigr)>0$ and $\mu\bigl((\xi_0,1)\bigr)>0$.
 \end{enumerate}
\end{lem}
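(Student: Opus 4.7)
My plan is to split the proof into the two parts (i) and (ii) and reduce each to results already available. Part (i) should follow from Lemma~\ref{lem:1} applied to the normalized measure $\tmu$, while part (ii) should follow from an Intermediate Value Theorem argument on a distribution function built from $\mu$. Throughout, the atomlessness hypothesis is what lets me move freely between open, half-open, and closed sub-intervals whose endpoints lie in $\{-1,-a,a,\xi_0,1\}$.

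For part (i) I would argue by contradiction. Suppose $\mu\bigl((-a,a)\bigr)=0$; since $\mu$ has no atoms, $\mu\bigl([-a,a]\bigr)=0$, so $\mu$ (hence $\tmu$) is concentrated on $[-1,-a]\cup[a,1]$. Atomlessness at $\pm 1,\pm a$ combined with $\mu\neq 0$ yields $\tmu\bigl((-1,-a)\cup(a,1)\bigr)>0$, so Lemma~\ref{lem:1}(e) forces $\int x^2\dx[\tmu]>a^2$, contradicting $\tmu\in\P^0\oda$. A symmetric argument, using Lemma~\ref{lem:1}(d) in place of (e), handles the other half: if $\mu\bigl((-1,-a)\cup(a,1)\bigr)=0$, then $\mu$ (and $\tmu$) is concentrated on $[-a,a]$ with $\tmu\bigl((-a,a)\bigr)=1>0$, forcing $\int x^2\dx[\tmu]<a^2$.

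For part (ii) I would first note that, by (i) together with the symmetry of $\mu$, one has $\mu\bigl((a,1)\bigr)=\tfrac12\,\mu\bigl((-1,-a)\cup(a,1)\bigr)>0$. Next, define $g\colon[a,1]\to\R$ by $g(\xi)=\mu\bigl((a,\xi]\bigr)$. Because $\mu$ has no atoms, $g$ is continuous, $g(a)=0$, and $g(1)=\mu\bigl((a,1)\bigr)>0$. Pick any value $c\in\bigl(0,g(1)\bigr)$; the Intermediate Value Theorem then supplies $\xi_0\in(a,1)$ with $g(\xi_0)=c$, so that both $g(\xi_0)$ and $g(1)-g(\xi_0)$ are positive. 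Since $\mu\bigl(\{\xi_0\}\bigr)=\mu\bigl(\{1\}\bigr)=0$, these two numbers coincide with $\mu\bigl((a,\xi_0)\bigr)$ and $\mu\bigl((\xi_0,1)\bigr)$ respectively, which is what is required.

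The only place anything non-trivial happens is the bookkeeping around atomlessness, which is needed both to interchange open and closed interval versions in the reduction to Lemma~\ref{lem:1} and to ensure that $g$ is continuous so that the IVT applies; beyond this the argument is entirely routine.
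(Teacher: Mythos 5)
Your proof is correct. Part (i) is essentially the paper's own argument (the paper simply says it "follows immediately by Lemma~\ref{lem:1}", and your normalization to $\tmu$ plus the appeal to parts (d) and (e) is exactly the intended reduction). For part (ii), however, you take a genuinely different and more direct route. The paper argues by contradiction: assuming that every $\xi\in(a,1)$ satisfies $\mu\bigl([a,\xi]\bigr)=0$ or $\mu\bigl([\xi,1]\bigr)=0$, it runs a bisection on $[a,1]$ to build a nested sequence of closed intervals each carrying the full mass $\mu\bigl([a,1]\bigr)>0$, whose intersection is a single point --- an atom, contradicting continuity. You instead observe that $g(\xi)=\mu\bigl((a,\xi]\bigr)$ is a continuous nondecreasing function (continuity being exactly equivalent to atomlessness of $\mu$ on $(a,1]$), with $g(a)=0<g(1)$, and apply the Intermediate Value Theorem to land strictly between the endpoint values; atomlessness then converts the half-open intervals to open ones. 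Both arguments are valid. Yours is shorter, avoids the indirect nested-interval construction, and uses the same device (continuity of distribution functions plus intermediate values) that the paper itself employs later in the proof of Theorem~\ref{th:3}, so it arguably makes the exposition more uniform; the paper's bisection proof has the minor virtue of not needing to invoke the equivalence between atomlessness and continuity of the distribution function, deriving the contradiction from first principles instead.
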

\begin{proof}
 The part (i) follows immediately by Lemma~\ref{lem:1}, because otherwise either $m_2>a^2$, or $m_2<a^2$. To prove (ii) assume, on the contrary, that for all $\xi\in(a,1)$,
 \begin{equation}\label{eq:4}
  \mu\bigl([a,\xi]\bigr)=0\quad\text{or}\quad\mu\bigl([\xi,1]\bigr)=0\,.
 \end{equation}
 We recursively define the sequence of sets $A_n=[a_n,b_n]\subset[a,1]$, $n\in\N$ starting with $A_1=[a,1]$. Using (i) and taking into account the symmetry of $\mu$, we get $\mu\bigl([a,1]\bigr)>0$. Suppose that we have constructed the sets $A_k$, $k=1,2,\dots,n$ \st~$\mu(A_k)=\mu\bigl([a,1]\bigr)$, $k=1,2,\dots n$ and $A_k\subset A_{k-1}$, $k=2,\dots,n$. If $\xi_n=\frac{a_n+b_n}{2}$, then, by~\eqref{eq:4}, two cases are possible. If $\mu\bigl([a,\xi_n]\bigr)=0$, then $\mu\bigl([\xi_n,1]\bigr)=\mu\bigl([a,1]\bigr)$ and we take $a_{n+1}=\xi_n$, $b_{n+1}=b_n$. If $\mu\bigl([\xi_n,1]\bigr)=0$, then $\mu\bigl([a,\xi_n]\bigr)=\mu\bigl([a,1]\bigr)$ and we take $a_{n+1}=a_n$, $b_{n+1}=\xi_n$. Obviously, for $A_{n+1}=[a_{n+1},b_{n+1}]$ we have $A_{n+1}\subset A_n$ and $\mu(A_{n+1})=\mu\bigl([a,1]\bigr)$.
 
 By the above construction $\mu\left(\displaystyle\bigcap_{n=1}^{\infty}A_n\right)=\mu\bigl([a,1]\bigr)>0$ and there exists $x\in[a,1]$ \st~$\displaystyle\bigcap_{n=1}^{\infty}A_n=\{x\}$. Because $\mu$ was continuous, we arrive at the contradiction. This completes the proof.
\end{proof}

Below we prove some decomposition-type result.

\begin{thm}\label{th:3}
 Let $0<a<1$ and $\mu\in\P^0\oda$ be a~continuous measure. There exist the sets $E_1,E_2\in\B\od$ \st~$E_1\cap E_2=\emptyset$, $\mu(E_1)>0,\mu(E_2)>0$, $\mu(E_1)+\mu(E_2)=1$ and $\mu|_{E_1},\mu|_{E_2}\in\M^0\oda$.
\end{thm}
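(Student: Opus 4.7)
The plan is to construct $E_1$ as the union of a central symmetric slice $[-s_0,s_0]$ and a symmetric ``near-middle'' annulus $[-\xi_0,-a]\cup[a,\xi_0]$, and let $E_2$ be its complement. The parameter $s_0\in(0,a)$ will be fixed by an intermediate-value argument so that the second-moment identity is preserved on each piece.

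First, I would invoke Lemma~\ref{lem:2}(ii) to pick $\xi_0\in(a,1)$ with $\mu\bigl((a,\xi_0)\bigr)>0$ and $\mu\bigl((\xi_0,1)\bigr)>0$. Setting $A=[-\xi_0,-a]\cup[a,\xi_0]$, $B=[-1,-\xi_0)\cup(\xi_0,1]$, and $I=(-a,a)$, these three symmetric sets partition $[-1,1]$, each of positive $\mu$-measure (the positivity of $\mu(I)$ coming from Lemma~\ref{lem:2}(i)). Because $x^2-a^2$ is strictly positive on $A\cup B$ and strictly negative on $I$, the quantities
\[
 \alpha=\int_A(x^2-a^2)\,\mu(\dx)\,,\qquad\beta=\int_B(x^2-a^2)\,\mu(\dx)
\]
are both strictly positive, and the prescribed second-moment identity for $\mu$ forces $\int_I(x^2-a^2)\,\mu(\dx)=-(\alpha+\beta)$.

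Next, I would balance $\alpha$ by shaving off some central mass. Consider $g(s)=\int_{[-s,s]}(x^2-a^2)\,\mu(\dx)$ for $s\in[0,a]$; continuity of $\mu$ makes $g$ continuous, with $g(0)=0$ and $g(a)=-(\alpha+\beta)<-\alpha<0$. The intermediate value theorem then yields $s_0\in(0,a)$ with $g(s_0)=-\alpha$. Setting $E_1=[-s_0,s_0]\cup A$ and $E_2=[-1,1]\setminus E_1$, both sets are symmetric and disjoint, $\mu(E_1)\ge\mu(A)>0$, $\mu(E_2)\ge\mu(B)>0$, and $\mu(E_1)+\mu(E_2)=1$. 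The identity $\int_{E_1}(x^2-a^2)\,\mu(\dx)=g(s_0)+\alpha=0$ gives $\mu|_{E_1}\in\M^0\oda$, and the analogous identity for $E_2$ follows by subtraction from the total.

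The main obstacle is choosing the right family of partitions. A naive attempt to split as $[-t,t]$ versus its complement fails in general: the analogous difference function is non-decreasing on $[a,1]$, starting negative at $t=a$ and returning to $0$ only at $t=1$, so it may not vanish in the open interval when the outer mass of $\mu$ is concentrated near $\pm 1$. Lemma~\ref{lem:2}(ii) precisely circumvents this obstruction by producing an intermediate point $\xi_0$, and hence a fixed ``near-middle'' chunk to be pulled into $E_1$; the width $s_0$ of the central slice then supplies a single continuous parameter with which to fine-tune the balance.
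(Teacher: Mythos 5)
Your proof is correct, and it arrives at a genuinely different decomposition than the paper's. The paper takes $E_1$ to be a single symmetric annulus $[-b_1,-a_1]\cup[a_1,b_1]$: it first chooses $b_1\in(a,1)$ so that the \emph{normalized} second moment of $\mu|_{[-b_1,b_1]}$ lies strictly between its value at $a$ and $a^2$ (this choice already forces $\mu\bigl((a,b_1)\bigr)>0$ and $\mu\bigl((b_1,1)\bigr)>0$, so part (ii) of Lemma~\ref{lem:2} is never invoked there), and then tunes the inner radius $a_1\in(0,a)$ by the intermediate value theorem so that the normalized second moment over the annulus equals exactly $a^2$. You instead fix the outer annulus $[-\xi_0,-a]\cup[a,\xi_0]$ once and for all via Lemma~\ref{lem:2}(ii) and tune only the central radius $s_0$, so your $E_1$ is a central interval together with a fixed annulus. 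Your version buys two things: working with the unnormalized signed integrals $\int(x^2-a^2)\,\mu(\dx)$ removes the paper's detour through distribution functions and the verification that the denominators $\mu\bigl([-x,x]\bigr)$ stay bounded away from zero, and only one intermediate-value argument is needed (the positivity of $\mu(E_1)$ and $\mu(E_2)$ then comes for free from the choice of $\xi_0$, whereas the paper must argue separately for \eqref{eq:9} and \eqref{eq:13}). What the paper's route buys is a slightly simpler extremal set, which is immaterial for Theorem~\ref{th:4}, where only the existence of some such splitting is used. Your closing remark about why the naive one-parameter family $[-t,t]$ can fail is accurate and is precisely the obstruction both constructions are designed to circumvent.
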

\begin{proof}
 Since $\mu$ is continuous and symmetric, Lemma~\ref{lem:2} implies
 \begin{equation*}
  \mu\bigl((0,a)\bigr)>0\,,\quad \mu\bigl((a,1)\bigr)>0\,,\quad\mu\bigl((0,a)\cup(a,1)\bigr)=\frac{1}{2}\,.
 \end{equation*}
 Consider the function $g:[a,1]\to\R$ given by
 \begin{equation*}
  g(x)=\int_{-1}^1 u^2\tmu|_{[-x,x]}(\dx[u])\,,\quad a\xle x\xle 1
 \end{equation*}
 and the measure $\nu=a^2\mu$, which is absolutely continuous \wrt~$\mu$. In particular, $\nu$ is continuous. Denote by $F_{\mu},F_{\nu}$ the distribution functions of the measures $\mu,\nu$, respectively, and rewrite the function $g$ in the form
 \begin{align}\label{eq:7}
  g(x)&=\biggl(\int_{-1}^1 u^2\mu|_{[-x,x]}(\dx[u])\biggr)\cdot\Bigl(\mu\bigl([-x,x]\bigr)\Bigr)^{-1}\\\nonumber
  &=\biggl(\int_{-x}^x u^2\mu(\dx[u])\biggr)\cdot\Bigl(\mu\bigl([-x,x]\bigr)\Bigr)^{-1}=\frac{\nu\bigl([-x,x]\bigr)}{\mu\bigl([-x,x]\bigr)}=\frac{F_{\nu}(x)-F_{\nu}(-x)}{F_{\mu}(x)-F_{\mu}(-x)}\,.
 \end{align}
 The distributions functions $F_{\mu},F_{\nu}$ are continuous by continuity of the measures $\mu,\nu$, respectively. Furthermore,
 \[
  F_{\mu}(x)-F_{\mu}(-x)=\mu\bigl([-x,x]\bigr)\xge\mu\bigl([-a,a]\bigr)>0
 \]
 for all $x\in[a,1]$. Hence $g$ is continuous by~\eqref{eq:7}.
 
 By Lemma~\ref{lem:1} we infer that $g(a)<a^2$. Since $\mu\in\P^0\oda$, then $g(1)=a^2$. Continuity of~$g$ implies that there exists $b_1\in(a,1)$ \st
 \begin{equation}\label{eq:8}
  g(a)<g(b_1)\quad\text{and}\quad g(b_1)<a^2\,.
 \end{equation}
 Then
 \begin{equation}\label{eq:9}
  \mu\bigl((a,b_1)\bigr)>0\quad\text{and}\quad\mu\bigl((b_1,1)\bigr)>0\,.
 \end{equation}
 Indeed, suppose that $\mu\bigl((a,b_1)\bigr)=0$. Hence $\mu|_{(a,b_1)}$ is a~zero-measure and consequently
\begin{align*}
  g(b_1)&=\biggl(\int_{-1}^1 u^2\mu|_{[-b_1,b_1]}(\dx[u])\biggr)\cdot\Bigl(\mu\bigl([-b_1,b_1]\bigr)\Bigr)^{-1}\\
  &=\biggl(\int_{-1}^1 u^2\mu|_{[-a,a]}(\dx[u])\biggr)\cdot\Bigl(\mu\bigl([-a,a]\bigr)\Bigr)^{-1}=g(a)\,,
 \end{align*}
 which contradicts~\eqref{eq:8}. Similarly, supposing that $\mu\bigl((b_1,1)\bigr)=0$, we arrive at $g(b_1)=g(1)=a^2$, which also contradicts~\eqref{eq:8}.
 
 Now we define the function $h:[0,a]\to\R$ by
 \[
  h(x)=\int_{-1}^1 u^2\tmu|_{[-b_1,-x]\cup[x,b_1]}(\dx[u])\,,0\xle x\xle a\,.
 \]
 Writing
 \[
  h(x)=\frac{\nu\bigl([x,b_1]\bigr)}{\mu\bigl([x,b_1]\bigr)}=\frac{F_{\nu}(b_1)-F_{\nu}(x)}{F_{\mu}(b_1)-F_{\mu}(x)}
 \]
 and using once more the continuity of $F_{\mu},F_{\nu}$, we obtain that $h$ is continuous. We have $h(0)=g(b_1)<a^2$ and
 \[
  h(a)=\int_{-1}^1 u^2\tmu|_{[-b_1,-a]\cup[a,b_1]}(\dx[u]).
 \]
 Using Lemma~\ref{lem:1} we arrive at $h(a)>a^2$. Consequently, by continuity of~$h$ we conclude that there exists $a_1\in(0,a)$ \st
 \begin{equation}\label{eq:10}
  h(a_1)=a^2\,.
 \end{equation}
 Define $E_1=[-b_1,-a_1]\cup [a_1,b_1]$, $E_2=[-1,1]\setminus E_1$. By~\eqref{eq:10} we obtain
 \begin{equation}\label{eq:11}
  \int_{-1}^1 u^2\mu|_{E_1}(\dx[u])=a^2\mu(E_1)\,.
 \end{equation}
 Taking into account the above equation and the moment condition $\int_{-1}^1 u^2\mu(\dx[u])=a^2$, we have
 \begin{equation}\label{eq:12}
 \int_{-1}^1 u^2\mu|_{E_2}(\dx[u])=a^2\mu(E_2)\,.
 \end{equation}
 The following inequalities are true:
 \begin{equation}\label{eq:13}
  \mu\bigl((0,a_1)\bigr)>0\quad\text{and}\quad \mu\bigl((a_1,a)\bigr)>0\,.
 \end{equation}
 Indeed, suppose that $\mu\bigl((0,a_1)\bigr)=0$. Then $h(a_1)=h(0)<a^2$, which contradicts~\eqref{eq:10}. Similarly, if $\mu\bigl((a_1,a)\bigr)=0$, then $h(a_1)=h(a)>a^2$, which also contradicts~\eqref{eq:10}.
 
 By~\eqref{eq:9} and~\eqref{eq:13} we infer that $\mu(E_1)>0$ and $\mu(E_2)>0$. Using~\eqref{eq:11} and~\eqref{eq:12} we arrive at $\mu|_{E_1},\mu|_{E_2}\in\M^0\oda$. The proof is now complete.
\end{proof}

Our next result offers some decomposition of a~continuous measure.

\begin{thm}\label{th:4}
 Let $0<a<1$ and $\mu\in\P^0\oda$ be a~continuous measure.
 \begin{enumerate}[\upshape (i)]
  \item
   There exist non-zero measures $\mu_1,\mu_2\in\M^0\oda$ such that $\mu=\mu_1+\mu_2$ and $\mu_1\ne c_1\mu$, $\mu_2\ne c_2\mu$ for any $c_1,c_2>0$.
  \item
   There exist measures $\nu_1,\nu_2\in\P^0\oda$ and $\alpha\in(0,1)$ \st~$\nu_1\ne\mu$, $\nu_2\ne\mu$ and
   \begin{equation}\label{eq:14}
    \mu=\alpha\nu_1+(1-\alpha)\nu_2\,.
   \end{equation}
 \end{enumerate}	 
\end{thm}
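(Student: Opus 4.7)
The plan is to derive both parts of Theorem~\ref{th:4} as direct corollaries of the decomposition already produced in Theorem~\ref{th:3}.

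For part~(i), I would apply Theorem~\ref{th:3} to obtain the sets $E_1,E_2\in\B\od$ with $E_1\cap E_2=\emptyset$, $\mu(E_1)+\mu(E_2)=1$, $\mu(E_i)>0$, and $\mu|_{E_1},\mu|_{E_2}\in\M^0\oda$. Setting $\mu_1=\mu|_{E_1}$ and $\mu_2=\mu|_{E_2}$, the fact that $E_1,E_2$ partition $[-1,1]$ gives at once
\[
\mu(B)=\mu(B\cap E_1)+\mu(B\cap E_2)=\mu_1(B)+\mu_2(B)\quad\text{for all }B\in\B\od\,,
\]
so $\mu=\mu_1+\mu_2$; both summands are non-zero and lie in $\M^0\oda$ by Theorem~\ref{th:3}. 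To rule out $\mu_1=c_1\mu$ with $c_1>0$, I would evaluate both sides on $E_2$: the left-hand side equals $\mu(E_1\cap E_2)=0$, whereas the right-hand side equals $c_1\mu(E_2)>0$, a contradiction. The argument for $\mu_2\ne c_2\mu$ is identical with the roles of $E_1,E_2$ exchanged.

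For part~(ii), I would set $\alpha=\mu(E_1)$, so $1-\alpha=\mu(E_2)$ and $\alpha\in(0,1)$, and then normalize:
\[
\nu_1=\widetilde{\mu_1}=\frac{\mu_1}{\alpha}\,,\qquad \nu_2=\widetilde{\mu_2}=\frac{\mu_2}{1-\alpha}\,.
\]
By the equivalence $\mu\in\M^0\oda\iff\tmu\in\P^0\oda$ already recorded in the excerpt, both $\nu_1,\nu_2$ belong to $\P^0\oda$. The identity~\eqref{eq:14} is then immediate:
\[
\alpha\nu_1+(1-\alpha)\nu_2=\mu_1+\mu_2=\mu\,.
\]
Finally, if we had $\nu_i=\mu$ for some $i$, then $\mu_i$ would equal either $\alpha\mu$ or $(1-\alpha)\mu$, i.e.\ a positive scalar multiple of $\mu$, contradicting part~(i).

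There is essentially no obstacle beyond what has already been carried out in Theorem~\ref{th:3}; the only points to keep in mind are the partition identity $\mu=\mu|_{E_1}+\mu|_{E_2}$, and the observation that a positive multiple of $\mu$ cannot be supported on a proper Borel subset whose complement carries positive $\mu$-mass, which is what lets part~(i) feed into part~(ii).
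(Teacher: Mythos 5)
Your proposal is correct and follows essentially the same route as the paper: both take $\mu_1=\mu|_{E_1}$, $\mu_2=\mu|_{E_2}$ from Theorem~\ref{th:3}, note that measures concentrated on disjoint sets of positive mass cannot be positive multiples of $\mu$, and then normalize to get part~(ii). Your write-up simply spells out a few verifications (the partition identity and the evaluation on $E_2$) that the paper leaves implicit.
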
	
\begin{proof}
 \begin{enumerate}[\upshape (i)]
  \item
   Take $E_1,E_2\in\B\od$ given by Theorem~\ref{th:3} and denote $\mu_1=\mu|_{E_1}$, $\mu_2=\mu|_{E_2}$. Since the measures $\mu_1,\mu_2$ are concentrated on disjoint sets and $\mu=\mu_1+\mu_2$, we conclude that $\mu_1\ne c_1\mu$, $\mu_2\ne c_2\mu$ for any $c_1,c_2>0$.
  \item
   Put $\nu_1=\tmu_1$, $\nu_2=\tmu_2$, where $\mu_1,\mu_2$ are defined in~(i). Since $\mu_1\ne c_1\mu$ and $\mu_2\ne c_2\mu$ for any $c_1,c_2>0$, then $\nu_1\ne\mu$, $\nu_2\ne\mu$. Setting $\alpha=\mu_1(E)$, we get~\eqref{eq:14}. This finishes the proof.
 \end{enumerate}	
\end{proof}

The corollary below follows immediately by~Theorem~\ref{th:4}.

\begin{cor}\label{cor:5}
 Let $0<a<1$ and $\mu\in\P^0\oda$. If $\mu$ is a~continuous measure, then $\mu$ is not the extreme point of $\P^0\oda$.
\end{cor}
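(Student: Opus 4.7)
The plan is to deduce the corollary immediately from Theorem~\ref{th:4}(ii), which has already done all the real work. By that theorem, whenever $\mu\in\P^0\oda$ is continuous we can produce measures $\nu_1,\nu_2\in\P^0\oda$ and a weight $\alpha\in(0,1)$ with $\mu=\alpha\nu_1+(1-\alpha)\nu_2$, $\nu_1\neq\mu$ and $\nu_2\neq\mu$. Since $\alpha\in(0,1)$ and since not all of $\mu,\nu_1,\nu_2$ coincide (indeed $\nu_1\neq\mu$), this displays $\mu$ as an ``interior'' point of a non-degenerate segment inside $\P^0\oda$, which by the definition of extreme point recalled in Section~2 rules out $\mu$ being an extreme point of $\P^0\oda$.

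So the proof is essentially a one-line unpacking of definitions, and there is no real obstacle to overcome: the whole force of the argument is already contained in the decomposition Theorem~\ref{th:3} and in Theorem~\ref{th:4}(ii). The only point worth stating explicitly in the write-up is that $\P^0\oda$ is indeed convex (so that the convex combination $\alpha\nu_1+(1-\alpha)\nu_2$ is a~meaningful object to test against), which is clear because both conditions defining $\P^0\oda$ — being a probability measure symmetric \wrt~$0$ and satisfying the second-moment equality~\eqref{eq:2} with $m_2=a^2$ — are preserved under convex combinations.
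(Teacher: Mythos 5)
Your proposal is correct and follows the paper exactly: the paper also obtains Corollary~\ref{cor:5} as an immediate consequence of Theorem~\ref{th:4}(ii), with no further argument. Your added remark on the convexity of $\P^0\oda$ is a harmless (and accurate) clarification.
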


Let $0<b<1$. By $\K(b)$ we denote the set of all discrete symmetric probability measures~$\mu$ on $\B\od$ with prescribed moments $(1,0,b^2,0)$ and admitting at most~four mass points. Now we state for the symmetric probability measure~$\mu$ a~necessary condition to be the extreme point of $\P^0\odb$.

\begin{thm}\label{th:6}
 Let $0<b<1$. Then  
  $\ext\Bigl(\P^0\odb\Bigr) \subset \K(b).$ 
\end{thm}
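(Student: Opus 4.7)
The plan is to argue the contrapositive: any $\mu\in\P^0\odb$ which either carries a non-zero continuous part, or is purely atomic with more than four mass points, will be exhibited as a non-trivial convex combination of two distinct elements of $\P^0\odb$. I would handle the two cases separately.

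For the first, consider the Lebesgue decomposition $\mu=\mu_c+\mu_d$ into atomless and purely atomic parts; both are symmetric by uniqueness of this decomposition together with the symmetry of $\mu$. Suppose $\mu_c\ne 0$, put $s=\mu_c\od\in(0,1]$, and let $\tmu_c=\mu_c/s\in\P^0\od$, which is continuous. Setting $a^2:=\int_{-1}^1 x^2\tmu_c(\dx)$, one sees that $0<a<1$: the case $a=0$ would force $\tmu_c=\delta_0$, while $a=1$ would force $\tmu_c$ to be concentrated on $\{-1,1\}$, both incompatible with continuity. Hence $\tmu_c\in\P^0\oda$, and Theorem~\ref{th:4}(ii) yields $\alpha\in(0,1)$ and (necessarily distinct) $\nu_1,\nu_2\in\P^0\oda$ with $\tmu_c=\alpha\nu_1+(1-\alpha)\nu_2$. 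Define $\eta_i=s\nu_i+\mu_d$ for $i=1,2$. Then $\eta_i\od=s+(1-s)=1$ and
\[
 \int_{-1}^1 x^2\eta_i(\dx)=sa^2+\int_{-1}^1 x^2\mu_d(\dx)=\int_{-1}^1 x^2\mu(\dx)=b^2,
\]
so $\eta_i\in\P^0\odb$. Since $\eta_1-\eta_2=s(\nu_1-\nu_2)\ne 0$ and $\mu=\alpha\eta_1+(1-\alpha)\eta_2$, extremality is contradicted.

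For the second case, write the purely atomic, symmetric measure as $\mu=w_0\delta_0+\sum_{i=1}^k u_i(\delta_{-x_i}+\delta_{x_i})$ with $0<x_1<\dots<x_k\xle 1$, $u_i>0$, and $w_0\xge 0$. The conditions $w_0+2\sum u_i=1$ and $2\sum u_i x_i^2=b^2$ are two linear equations on the weights, and having more than four atoms means $k\xge 3$ when $w_0=0$ and $k\xge 2$ when $w_0>0$. In either case the number of free weights strictly exceeds two, so the homogeneous system admits a non-zero solution $(\epsilon_0,\epsilon_1,\dots,\epsilon_k)$ (with $\epsilon_0=0$ when $w_0=0$). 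The associated symmetric signed measure $\nu=\epsilon_0\delta_0+\sum_{i=1}^k\epsilon_i(\delta_{-x_i}+\delta_{x_i})$ is supported on the atoms of $\mu$ and satisfies $\nu\od=\int_{-1}^1 x^2\nu(\dx)=0$. Because every relevant atom carries positive mass, $\mu\pm t\nu\in\P^0\odb$ for all sufficiently small $t>0$, and $\mu=\tfrac12(\mu+t\nu)+\tfrac12(\mu-t\nu)$ contradicts extremality.

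The main obstacle is the first case: lifting a convex decomposition of $\tmu_c$ (which lives in $\P^0\oda$) to a decomposition of $\mu$ (which must sit in $\P^0\odb$). The ansatz $\eta_i=s\nu_i+\mu_d$ resolves this cleanly, the scaling $s$ being calibrated so that $s\cdot a^2$ equals the $\mu_c$-contribution to the second moment while $\mu_d$ remains untouched. Guaranteeing $a\in(0,1)$ so that Theorem~\ref{th:4} actually applies is a minor but essential subtlety, handled by the atomlessness of $\mu_c$. The second case is essentially a dimension count and presents no genuine difficulty.
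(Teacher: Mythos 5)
Your argument is correct and, for the case of a non-trivial continuous part, follows essentially the same route as the paper: both split the measure into its continuous and discrete parts, normalize the continuous part so that it lands in $\P^0\oda$ (your $s$ and $a$ playing the roles of the paper's $\beta$ and $a$), apply Theorem~\ref{th:4}(ii) there, and recombine with the untouched discrete part to exhibit a non-trivial convex decomposition inside $\P^0\odb$. You are in fact slightly more careful than the paper in two places: you justify $0<a<1$ (needed to invoke Theorem~\ref{th:4}) from atomlessness, which the paper asserts without comment, and your formula $\eta_i=s\nu_i+\mu_d$ avoids the paper's typo $\sigma_1=\beta\nu_1+(1-\beta)\nu_2$ (which should read $\beta\nu_1+(1-\beta)\lambda_2$). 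Where you genuinely diverge is the purely discrete case: the paper disposes of it by citing Karlin--Studden (Theorem~6.1, p.~101), i.e.\ the general description of the extreme points of moment sets of discrete measures, whereas you give a self-contained perturbation argument (two linear constraints on more than two free weights leave a non-trivial kernel, so $\mu\pm t\nu$ stays in $\P^0\odb$ for small $t$). This buys elementarity and makes the role of symmetry transparent, at the cost of one small notational gap: a discrete measure may have countably infinitely many atoms, so your finite sum $\sum_{i=1}^{k}u_i(\delta_{-x_i}+\delta_{x_i})$ does not literally cover every measure outside $\K(b)$; the fix is immediate (perturb the weights of just three positive atom locations and leave the rest alone), but it should be stated.
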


\begin{proof}
 Let $\sigma\in\P^0\odb$ be the extreme point of $\P^0\odb$. Then $\sigma$ can be uniquely represented as the sum of a~continuous measure and a discrete measure:
 \begin{equation}\label{eq:15}
  \sigma=\beta\lambda_1+(1-\beta)\lambda_2\,,
 \end{equation}
 where $\beta\in[0,1]$, $\lambda_1,\lambda_2\in\P\od$, $\lambda_1$ is a~continuous measure, while $\lambda_2$ is a~discrete measure.
 
 Observe that $0\xle\beta<1$, because if $\beta=1$, then $\sigma$ was continuous and, by Corollary~\ref{cor:5}, $\sigma$ was not the extreme point. If $\beta=0$, then $\sigma$ is a~discrete measure and the assertion follows by \cite[Theorem 6.1, p.~101]{KarStu66}.
 
 Suppose now that $0<\beta<1$. We claim that in this case $\sigma$ is not the extreme point. If we show it, the proof is finished.
 
 It is not difficult to check that both $\lambda_1$ and $\lambda_2$ are symmetric \wrt~$0$. Indeed, if $\sigma\bigl(\{x\}\bigr)>0$ for some $x\in[-1,1]$, then $\sigma\bigl(\{-x\}\bigr)>0$. Therefore a~discrete part of $\sigma$, \ie~$(1-\beta)\lambda_2$, is symmetric, which implies that $\lambda_2$ is symmetric. Hence also $\lambda_1$ is symmetric.
 
 As a~probability measure, $\lambda_1$ is non-zero. Then $\lambda_1\in\P^0\oda$, where $a^2=\displaystyle\int_{-1}^1 x^2\lambda_1(\dx)$ and $0<a<1$. Now we apply Theorem~\ref{th:4}~(ii) to the measure $\lambda_1$. There exist the measures $\nu_1,\nu_2\in\P^0\oda$ and $0<\alpha<1$ \st~$\nu_1\ne\nu_2$ and
 \begin{equation}\label{eq:16}
  \lambda_1=\alpha\nu_1+(1-\alpha)\nu_2\,.
 \end{equation}
 
 We have also $\lambda_2\in\P^0\bigl([-1,1],c\bigr)$, where $c^2=\displaystyle\int_{-1}^1 x^2\lambda_2(\dx)$ and $0<c<1$. Write
 \begin{equation}\label{eq:17}
  \lambda_2=\alpha\lambda_2+(1-\alpha)\lambda_2\,.
 \end{equation}
 
 Using the properties of the measures $\sigma,\lambda_1,\lambda_2$ we get
 \begin{equation}\label{eq:18}
  b^2=\int_{-1}^1 x^2\sigma(\dx)=\beta\int_{-1}^1 x^2\lambda_1(\dx)+(1-\beta)\int_{-1}^1 x^2\lambda_2(\dx)=\beta a^2+(1-\beta)c^2.
 \end{equation}
 
 By \eqref{eq:15}, \eqref{eq:16}, \eqref{eq:17}, $\sigma$ can be written as
 \[
  \sigma=\alpha\sigma_1+(1-\alpha)\sigma_2
 \]
 with $\sigma_1=\beta\nu_1+(1-\beta)\nu_2$ and $\sigma_2=\beta\nu_2+(1-\beta)\lambda_2$. Since $\nu_1\ne\nu_2$, then $\sigma_1\ne\sigma_2$. Using \eqref{eq:18} we arrive at $\sigma_1,\sigma_2\in\P^0\odb$, which implies that $\sigma$ is not the extreme point of $\P^0\odb$ and completes the proof.
\end{proof}

Notice that every probability measure~$\mu \in \K(b)$ (with $0<b<1$) can be written in the form
\begin{equation}\label{eq:19}
 \mu_{(x,y)}=\frac{1}{2}p\bigl(\delta_x+\delta_{-x}\bigr)+\frac{1}{2}q\bigl(\delta_y+\delta_{-y}\bigr)
\end{equation}
for some $0\xle x\xle y\xle 1$ and $p,q>0$ with $p+q=1$.

Observe that if
\[
 \int_{-1}^1 u^2\mu_{(x,y)}(\dx[u])=b^2\,,
\]
then
\begin{equation}\label{eq:21}
 px^2+qy^2=b^2\,.
\end{equation}

It is easy to see that if $0\xle x\xle y\xle 1$ and $p,q>0$ with $p+q=1$ satisfying the above equation, then $0 \xle x\xle b\xle y \xle 1$.

If additionally $x\ne y$, then the numbers $p,q$ could be computed by
\begin{equation}\label{eq:21a}
 p=\frac{y^2-b^2}{y^2-x^2}\,,\quad q=\frac{b^2-x^2}{y^2-x^2}\,.
\end{equation}

\begin{rem}\label{rem:7}
The set $\K(b)$ consists of all probability measures $\mu_{(x,y)}$ given by~\eqref{eq:19}, where $0 \xle x \xle b\xle y\xle 1$, $p,q>0$ with $p+q=1$ satisfying~\eqref{eq:21}. 
\end{rem}

 One could easily show the lemma.

\begin{lem}\label{lem:7c}
Let $0<b<1$. Then $\K(b)  \subset \ext\Bigl(\P^0\odb\Bigr).$ 
\end{lem}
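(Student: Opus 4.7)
The plan is to show that any convex decomposition of a measure $\mu_{(x,y)} \in \K(b)$ must be trivial, using the fact that the support is so small that the symmetry, total mass, and second‐moment constraints determine the measure uniquely.

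Fix $\mu_{(x,y)} \in \K(b)$ as in \eqref{eq:19} with $0 \xle x \xle b \xle y \xle 1$, and suppose
\[
 \mu_{(x,y)} = t\nu_1 + (1-t)\nu_2
\]
for some $\nu_1,\nu_2 \in \P^0\odb$ and $t \in (0,1)$. The first step is to observe that $\operatorname{supp}\nu_i \subset \operatorname{supp}\mu_{(x,y)} \subset \{-y,-x,x,y\}$: indeed, if $B \in \B\od$ satisfies $\mu_{(x,y)}(B)=0$, then $t\nu_1(B)+(1-t)\nu_2(B)=0$ forces $\nu_1(B)=\nu_2(B)=0$, since $t,1-t > 0$ and the $\nu_i$ are non-negative. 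Hence each $\nu_i$ is discrete and concentrated on the (at most four) nodes of $\mu_{(x,y)}$.

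Next I would treat the generic case $0 < x < y$. Symmetry of $\nu_i$ with respect to $0$ forces
\[
 \nu_i = \tfrac12 p_i(\delta_x+\delta_{-x}) + \tfrac12 q_i(\delta_y+\delta_{-y})
\]
with $p_i,q_i \xge 0$. The conditions $\nu_i \in \P^0\odb$ translate into the linear system
\[
 p_i + q_i = 1\,,\qquad p_i x^2 + q_i y^2 = b^2\,.
\]
Since $x \ne y$, the determinant $y^2-x^2$ is non-zero, so the system has a unique solution, given precisely by \eqref{eq:21a}. Therefore $p_i = p$, $q_i = q$ for $i = 1,2$, and $\nu_1 = \nu_2 = \mu_{(x,y)}$, proving extremality.

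Finally I would dispose of the two degenerate configurations. If $x = 0$, then $\operatorname{supp}\nu_i \subset \{-y,0,y\}$, and writing $\nu_i = p_i \delta_0 + \tfrac12 q_i(\delta_y+\delta_{-y})$ the constraints $p_i + q_i = 1$ and $q_i y^2 = b^2$ again determine $p_i, q_i$ uniquely (here $y > 0$ because $y \xge b > 0$). If $x = y$, then necessarily $x = y = b$, and $\nu_i$ is supported on $\{-b,b\}$; symmetry plus total mass $1$ forces $\nu_i = \tfrac12(\delta_b+\delta_{-b}) = \mu_{(b,b)}$. In either case the decomposition is trivial. There is no real obstacle here — the only point requiring a little care is the preliminary reduction of $\operatorname{supp}\nu_i$ to the support of $\mu_{(x,y)}$, after which the proof reduces to solving a $2 \times 2$ linear system with non-zero determinant.
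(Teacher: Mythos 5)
Your proof is correct and complete. The paper itself omits the proof of this lemma entirely (it is dismissed with ``One could easily show the lemma''), and your argument supplies exactly the natural verification the authors had in mind: the observation that any convex combinand of $\mu_{(x,y)}$ with positive coefficient must be concentrated on $\{-y,-x,x,y\}$, after which symmetry reduces the mass, normalization and second-moment constraints to a $2\times 2$ linear system with determinant $y^2-x^2\ne 0$, and the degenerate cases $x=0$ and $x=y=b$ are handled separately. No gaps.
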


Thus, we derive from Theorem~\ref{th:6} and Lemma~\ref{lem:7c} the main result of this paper.

\begin{thm}\label{th:7}
 Let $0<b<1$. Then $\ext\Bigl(\P^0\odb\Bigr)=\K(b).$
\end{thm}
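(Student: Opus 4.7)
The plan is to observe that the statement asserts an equality of two sets, and each of the two inclusions is already available in the material preceding the theorem. The forward inclusion $\ext\bigl(\P^0\odb\bigr)\subset\K(b)$ is exactly Theorem~\ref{th:6}, whose proof has just been given: the decomposition results of Theorems~\ref{th:3} and~\ref{th:4} were used to eliminate any continuous part from a candidate extreme measure and reduce the question to discrete measures on at most four atoms. The reverse inclusion $\K(b)\subset\ext\bigl(\P^0\odb\bigr)$ is exactly Lemma~\ref{lem:7c}. Writing the two inclusions side by side gives the desired equality, so the actual proof is a one-line invocation of these two results.

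Since Lemma~\ref{lem:7c} is only announced (``one could easily show''), I would briefly supply its justification en route, both to make the proof self-contained and to advertise where the substantive content really sits. Fix $\mu_{(x,y)}\in\K(b)$ in the form~\eqref{eq:19} and suppose $\mu_{(x,y)}=t\sigma_1+(1-t)\sigma_2$ with $t\in(0,1)$ and $\sigma_1,\sigma_2\in\P^0\odb$. Since $t,1-t>0$ and both $\sigma_i$ are non-negative, each $\sigma_i$ must be supported on $\operatorname{supp}(\mu_{(x,y)})\subset\{-y,-x,x,y\}$. Being symmetric probability measures on that finite set, the $\sigma_i$ then have the shape of~\eqref{eq:19} with some weights $(p_i,q_i)$ satisfying $p_i+q_i=1$, and the constraint on the second moment forces~\eqref{eq:21}. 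When $0<x<y$, formula~\eqref{eq:21a} pins $p_i,q_i$ down uniquely in terms of $b,x,y$, so $\sigma_1=\sigma_2=\mu_{(x,y)}$. The degenerate cases ($x=0$, or $x=y=b$) collapse the support still further and are handled in the same way.

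There is no genuine obstacle to overcome here, because all the hard work has already been carried out in Theorem~\ref{th:6}, via Lemmas~\ref{lem:1}--\ref{lem:2} and the decomposition Theorems~\ref{th:3}--\ref{th:4}. The reverse inclusion reduces to the trivial observation that the linear system $p+q=1$, $px^2+qy^2=b^2$ has a unique positive solution whenever $x<y$. Hence the entire proof of Theorem~\ref{th:7} is the concatenation of Theorem~\ref{th:6} and Lemma~\ref{lem:7c}.
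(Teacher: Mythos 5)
Your proof is correct and follows exactly the paper's route: the paper also obtains Theorem~\ref{th:7} as the immediate concatenation of Theorem~\ref{th:6} (the inclusion $\ext\bigl(\P^0\odb\bigr)\subset\K(b)$) and Lemma~\ref{lem:7c} (the reverse inclusion). Your added sketch of Lemma~\ref{lem:7c} --- support containment forces $\sigma_i$ onto $\{-y,-x,x,y\}$, and the moment constraints together with~\eqref{eq:21a} pin the weights down uniquely --- correctly fills in the step the paper leaves as ``one could easily show.''
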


Of course the extreme points of $\P^0\odb$ are the measures admitting $2,3$ or~$4$ mass points. The only two-point extreme measure is
\[
 \mu_{(b,b)}=\frac{1}{2}\bigl(\delta_{-b}+\delta_b\bigr)\,.
\]
All three-point extreme measures have the form
\[
 \mu_{(0,y)}=p\delta_0+\frac{1}{2}q\bigl(\delta_{-y}+\delta_y\bigr)\,,
\]
where $b\xle y\xle 1$. In particular, for $y=1$ we get
\[
 \mu_{(0,1)}=(1-b^2)\delta_0+\frac{b^2}{2}\bigl(\delta_{-1}+\delta_1\bigr)\,.
\]

\section{Integral representation of probability measures}

As an application of results obtained in the previous section concerning the extreme measures we shall give a theorem on integral representation of probability measures from the set $\P^0\odb$.

The set of probability measures $\P(\R)$ is metrizable. In metrizing of the weak convergence of probability measures on $\B(\R)$ the L\'{e}vy-Prohorov distance  (see~\cite[Chapter~11, Theorem 11.3.3, p. 395]{Dud02}) can be used. The set $\P\od$ is a metrizable compact convex subset of $\P(\R)$. 
 It is not difficult to prove that $\P^0\odb$ is a closed and convex subset of $\P\od$. Consequently, we have the following lemma. 
 \begin{lem}
 The set $\P^0\odb$ is a metrizable compact convex subset of $\P\od$.
 \end{lem}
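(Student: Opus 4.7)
The plan is to deduce everything from the fact, already stated in the excerpt, that $\P\od$ is a metrizable compact convex subset of $\P(\R)$. Metrizability of $\P^0\odb$ is automatic, since every subset of a metrizable space is metrizable in the subspace topology (with the Lévy--Prohorov distance restricted from $\P\od$). Once I have convexity and closedness of $\P^0\odb$ inside $\P\od$, compactness follows because a closed subset of a compact space is compact. So the two substantive tasks are (a) verifying convexity and (b) showing closedness with respect to weak convergence.

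For convexity, I would take $\mu_1,\mu_2\in\P^0\odb$ and $t\in[0,1]$, and simply check that $t\mu_1+(1-t)\mu_2$ still lies in $\P^0\odb$: it is a probability measure on $\B\od$; symmetry is preserved since $\bigl(t\mu_1+(1-t)\mu_2\bigr)(-B)=t\mu_1(B)+(1-t)\mu_2(B)$; and the second moment is $tb^2+(1-t)b^2=b^2$. So $\P^0\odb$ is convex.

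For closedness, let $(\mu_n)\subset\P^0\odb$ converge weakly to some $\mu\in\P\od$. Since $[-1,1]$ is compact, the function $u\mapsto u^2$ is bounded and continuous, hence weak convergence yields
\[
 \int_{-1}^1 u^2\mu(\dx[u])=\lim_{n\to\infty}\int_{-1}^1 u^2\mu_n(\dx[u])=b^2,
\]
so $\mu$ has the prescribed second moment. For symmetry, given any $f\in\C[-1,1]$, the function $\check f(u)=f(-u)$ is also in $\C[-1,1]$, and the symmetry of $\mu_n$ amounts to $\int f\dx[\mu_n]=\int\check f\dx[\mu_n]$. Passing to the limit gives $\int f\dx[\mu]=\int\check f\dx[\mu]$ for every $f\in\C[-1,1]$; since continuous functions separate finite Borel measures on $\od$ (Riesz--Markov), this forces $\mu\bigl(-B\bigr)=\mu(B)$ for all $B\in\B\od$. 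Hence $\mu\in\P^0\odb$, proving closedness. Combined with the compactness of $\P\od$ this gives compactness of $\P^0\odb$, and finishes the proof.

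There is no real obstacle here: the argument is a routine application of weak convergence, and the only point worth being careful about is that the ``symmetry'' functional identity obtained in the limit is, a~priori, only against continuous test functions, which is nevertheless enough to conclude $\mu=\mu\circ(-\mathrm{id})$ as measures on $\B\od$.
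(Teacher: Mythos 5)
Your proof is correct and follows exactly the route the paper sketches (the paper merely asserts that $\P^0\odb$ is a closed convex subset of the metrizable compact convex set $\P\od$ and leaves the verification to the reader); your weak-convergence check of the second moment and of symmetry via the test functions $f$ and $\check f$ fills in that routine verification properly.
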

  
On $\P^0\odb$ consider the topology induced from $\P(\R)$ and the mapping $T:[0,b]\times[b,1]\to \K(b)$ given by 
\[
 T(x,y)=\mu_{(x,y)}\,.
\]
Taking into account Remark \ref{rem:7} and the formulae \eqref{eq:21a}, it is not difficult to prove the lemma.

\begin{lem}\label{lem:7b}
 The mapping~$T$ is a~homeomorphism between $[0,b]\times[b,1]$ and $\K(b)$.
 \end{lem}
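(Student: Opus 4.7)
The plan is to verify four things in order: $T$ is well-defined with image in $\K(b)$; $T$ is surjective; $T$ is injective; and $T$ is continuous with respect to the weak topology. Once these are in place the conclusion is immediate, for $[0,b]\times[b,1]$ is compact, $\K(b)\subset\P\od$ inherits a metrizable and hence Hausdorff topology from $\P(\R)$, and a continuous bijection from a compact space onto a Hausdorff space is automatically a homeomorphism.

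Well-definedness and surjectivity are essentially the content of Remark~\ref{rem:7}: for $(x,y)\in[0,b]\times[b,1]$ with $x<y$ the weights $p,q$ are given uniquely by~\eqref{eq:21a}, so $\mu_{(x,y)}\in\K(b)$; the single diagonal point $(b,b)$ yields $\mu_{(b,b)}=\tfrac12(\delta_{-b}+\delta_b)$; and the remark states that these measures exhaust $\K(b)$. For injectivity I would locate supports: for $0<x<y$ the support of $\mu_{(x,y)}$ is $\{-y,-x,x,y\}$, for $0=x<y$ it is $\{-y,0,y\}$, and at $x=y=b$ it is $\{-b,b\}$. In each case the unordered pair $\{x,y\}$, and hence $(x,y)$ (because $x\le y$ throughout the parameter square), is recovered from $\operatorname{supp}\mu_{(x,y)}$.

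The substance of the proof is continuity of $T$. Fix $(x_0,y_0)\in[0,b]\times[b,1]$ and take a sequence $(x_n,y_n)\to(x_0,y_0)$. In the generic case $x_0<y_0$, the formulas in~\eqref{eq:21a} are continuous on a neighbourhood of $(x_0,y_0)$, giving $p_n\to p_0$ and $q_n\to q_0$; then for any $f\in\mathcal{C}\od$,
\[
 \int f\,\mathrm{d}\mu_{(x_n,y_n)}=\tfrac12 p_n\bigl[f(x_n)+f(-x_n)\bigr]+\tfrac12 q_n\bigl[f(y_n)+f(-y_n)\bigr]\longrightarrow\int f\,\mathrm{d}\mu_{(x_0,y_0)}
\]
by continuity of $f$, which is the required weak convergence.

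The main obstacle is the corner $(x_0,y_0)=(b,b)$, where the denominator $y^2-x^2$ in~\eqref{eq:21a} vanishes and $p_n,q_n$ need not have individual limits along an approximating sequence. The resolution exploits the invariant $p_n+q_n=1$ together with $x_n,y_n\to b$: continuity of $f$ gives $f(\pm x_n),f(\pm y_n)\to f(\pm b)$, so
\[
 \int f\,\mathrm{d}\mu_{(x_n,y_n)}=\tfrac12(p_n+q_n)\bigl[f(b)+f(-b)\bigr]+o(1)=\int f\,\mathrm{d}\mu_{(b,b)}+o(1),
\]
with the $o(1)$ absorbing the bounded prefactors $p_n,q_n\in[0,1]$ times $f(\pm x_n)-f(\pm b)$ and $f(\pm y_n)-f(\pm b)$. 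This settles continuity at the corner, and the compactness/Hausdorffness paragraph above then promotes $T$ to a homeomorphism.
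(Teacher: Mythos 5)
The paper offers no proof of this lemma (it only says the statement follows from Remark~\ref{rem:7} and~\eqref{eq:21a}), so your proposal can only be judged on its own merits. Your overall scheme --- continuous bijection from a compact space onto a metrizable, hence Hausdorff, subspace of $\P\od$ --- is the natural one, and your continuity argument is sound, including the treatment of the corner $(b,b)$ via $p_n+q_n=1$ and the boundedness of $p_n,q_n$.

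The genuine gap is injectivity, and it cannot be repaired as written because $T$, taken literally on all of $[0,b]\times[b,1]$, is not injective. For $0\le x<b$ and $y=b$ the formulae~\eqref{eq:21a} give $p=0$, $q=1$, and for $x=b$, $b<y\le 1$ they give $p=1$, $q=0$; in either case~\eqref{eq:19} yields $\mu_{(x,y)}=\frac12\bigl(\delta_{-b}+\delta_b\bigr)=\mu_{(b,b)}$. Hence $T$ collapses the whole L-shaped set $\bigl([0,b]\times\{b\}\bigr)\cup\bigl(\{b\}\times[b,1]\bigr)$ to a single measure, and your support computation (``for $0<x<y$ the support is $\{-y,-x,x,y\}$'') fails exactly on these two edges, where one atom carries zero mass; those points also violate the requirement $p,q>0$ in the definition of $\K(b)$, so even well-definedness needs a convention there. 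Restricting the domain does not save the statement: on $\{(x,y):0\le x<b<y\le 1\}\cup\{(b,b)\}$ the map $T$ is a continuous bijection onto $\K(b)$, but its inverse is discontinuous at $\mu_{(b,b)}$, since $\mu_{(0,y_n)}\to\mu_{(b,b)}$ weakly as $y_n\downarrow b$ while $(0,y_n)\to(0,b)\ne(b,b)$. So the lemma as stated is too strong; what is actually available (and what Theorem~\ref{th:8} needs) is that $T$ is a continuous surjection of the square onto $\K(b)$, injective off the collapsed L-shaped set --- equivalently, that $\K(b)$ is homeomorphic to the quotient of the square by that set. You should flag this defect rather than assert injectivity from supports.
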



Now we state the main result of this section.

\begin{thm}\label{th:8}
 Let $0<b<1$. For every probability measure $\sigma\in\P^0\odb$ there exists a~probability measure $\gamma$ on $\B\bigl([0,b]\times[b,1]\bigr)$ \st
 \[
  \int_{-1}^{1} f(u)\sigma(\dx[u])=\int_{-1}^{1}f(u)\biggl(\int_{[0,b]\times[b,1]}\mu_{(x,y)}\gamma\bigl(\dx[(x,y)]\bigr)\biggr)(\dx[u])
 \]
 for any continuous function $f:[-1,1]\to\R$.
\end{thm}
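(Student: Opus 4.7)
The plan is to view Theorem~\ref{th:8} as a direct application of Choquet's integral representation theorem for metrizable compact convex sets, pulled back through the homeomorphism~$T$.

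First I would observe that all the ingredients needed for Choquet's theorem are already in place. By the lemma preceding Lemma~\ref{lem:7b}, $\P^0\odb$ is a metrizable compact convex subset of the locally convex space $\P(\R)$ (equipped with the topology of weak convergence). By Theorem~\ref{th:7}, its set of extreme points equals $\K(b)$, and by Lemma~\ref{lem:7b} this set is homeomorphic to the compact rectangle $[0,b]\times[b,1]$; in particular $\K(b)$ is closed in $\P^0\odb$. For every continuous $f:[-1,1]\to\R$, the functional $\mu\mapsto\int_{-1}^1 f\dx[\mu]$ is continuous and affine on $\P^0\odb$, so Choquet's theorem (in its classical metrizable version, see~\cite[Chapter~3]{Phe01} or similar) yields, for every $\sigma\in\P^0\odb$, a probability measure $\tmu_\sigma$ supported on $\K(b)$ such that
\[
 \int_{-1}^{1} f(u)\sigma(\dx[u])=\int_{\K(b)}\biggl(\int_{-1}^{1}f(u)\mu(\dx[u])\biggr)\tmu_\sigma(\dx[\mu])
\]
for every $f\in\C[-1,1]$.

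Next, I would transfer this representation to the parameter space $[0,b]\times[b,1]$ via the homeomorphism $T$ of Lemma~\ref{lem:7b}. Define $\gamma$ to be the pullback $\gamma=T^{-1}_{*}\tmu_\sigma$, i.e.
\[
 \gamma(A)=\tmu_\sigma\bigl(T(A)\bigr)\,,\quad A\in\B\bigl([0,b]\times[b,1]\bigr)\,.
\]
Since $T$ is a homeomorphism onto~$\K(b)$, this is a well-defined Borel probability measure. A standard change-of-variables then gives, for every $f\in\C[-1,1]$,
\[
 \int_{-1}^{1} f(u)\sigma(\dx[u])=\int_{[0,b]\times[b,1]}\biggl(\int_{-1}^{1}f(u)\mu_{(x,y)}(\dx[u])\biggr)\gamma\bigl(\dx[(x,y)]\bigr)\,.
\]

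Finally, I would identify the right-hand side with the expression appearing in the statement. The inner integral defines a barycentric measure $\rho$ on $\B\od$ by
\[
 \rho(B)=\int_{[0,b]\times[b,1]}\mu_{(x,y)}(B)\,\gamma\bigl(\dx[(x,y)]\bigr)\,,\quad B\in\B\od\,,
\]
and a routine Fubini argument (first for indicators, then for simple functions, then for general continuous $f$ by bounded convergence, using that $(x,y)\mapsto\int f\dx[\mu_{(x,y)}]$ is continuous and bounded) shows that
\[
 \int_{-1}^{1}f(u)\rho(\dx[u])=\int_{[0,b]\times[b,1]}\biggl(\int_{-1}^{1}f(u)\mu_{(x,y)}(\dx[u])\biggr)\gamma\bigl(\dx[(x,y)]\bigr)\,,
\]
which is precisely the form demanded in the theorem.

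The only delicate point is the invocation of Choquet's theorem: one must check that $\K(b)$ is a Borel (in fact closed) subset of $\P^0\odb$ so that $\tmu_\sigma$ is well-defined on it, which is why the compactness part of Lemma~\ref{lem:7b} was needed. Everything else is either supplied by the earlier lemmas or is a routine measure-theoretic verification.
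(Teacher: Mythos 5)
Your proposal is correct and follows essentially the same route as the paper: apply Choquet's Representation Theorem to the metrizable compact convex set $\P^0\odb$ whose extreme points are $\K(b)$ by Theorem~\ref{th:7}, pull the representing measure back to $[0,b]\times[b,1]$ through the homeomorphism $T$ of Lemma~\ref{lem:7b} to define $\gamma$, and then interchange the order of integration to reach the stated form. The only differences are cosmetic (you specialize Choquet to the affine functionals $L_f$ from the outset and spell out the Fubini step, while the paper states the representation for all continuous linear functionals first), so there is nothing substantive to add.
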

\begin{proof}
We shall use Choquet's Representation Theorem (\cite[p.~14]{Phe01}). Taking into account Theorem~\ref{th:7} we obtain that for every measure $\sigma\in\P^0\odb$ there exists a~probability measure $m\in\K(b)$ \st
\[
 L(\sigma)=\int_{\K(b)} L(\mu)m(\dx[\mu])
\]
for any continuous linear functional $L$ on $\P\od$.

Let $\gamma$ be the~measure on $\B\bigl([0,b]\times[b,1]\bigr)$ defined by $\gamma=m\circ T$, \ie~$\gamma(B)=m(TB)$ for $B\in\B\bigl([0,b]\times[b,1]\bigr)$. 
Taking into account Lemma ~\ref{lem:7b}, for any continuous linear functional $L$ on $\P\od$ we have
\begin{multline}\label{eq:23}
 L(\sigma)=\int_{\K(b)} L(\mu)m(\dx[\mu])=\int_{T^{-1}\bigl(\K(b)\bigr)}L\circ T\bigl((x,y)\bigr)(m\circ T)\bigl(\dx[(x,y)]\bigr)\\
 =\int_{[0,b]\times[b,1]}L\circ T\bigl((x,y)\bigr)\gamma\bigl(\dx[(x,y)]\bigr)\,.
\end{multline}

For every continuous function $f:[-1,1]\to\R$ the linear functional
\[
 L_f(\mu)=\int_{-1}^1 f(u)\mu(\dx[u])
\]
is continuous. Hence, for the probability measure $\sigma\in\P^0\odb$ we obtain by~\eqref{eq:23}
\begin{multline*}
 \int_{-1}^{1} f(u)\sigma(\dx[u])=\int_{[0,b]\times[b,1]}\int_{-1}^{1} f(u)\mu_{(x,y)}(\dx[u])\gamma\bigl(\dx[(x,y)]\bigr)\\
 =\int_{-1}^{1}f(u)\biggl(\int_{[0,b]\times[b,1]}\mu_{(x,y)}\gamma\bigl(\dx[(x,y)]\bigr)\biggr)(\dx[u])\,.
\end{multline*}
The theorem is proved.
\end{proof}

\begin{rem}
 Notice that Theorem~\ref{th:8} is related to \cite[Theorem 6.3, p.~103]{KarStu66}.
\end{rem}

\bibliographystyle{plain}

\begin{thebibliography}{1}
	
	\bibitem{Dud02}
	R.~M. Dudley.
	\newblock {\em Real analysis and probability}, volume~74 of {\em Cambridge
		Studies in Advanced Mathematics}.
	\newblock Cambridge University Press, Cambridge, 2002.
	\newblock Revised reprint of the 1989 original.
	
	\bibitem{Hil87}
	F.~B. Hildebrand.
	\newblock {\em Introduction to numerical analysis}.
	\newblock Dover Publications Inc., New York, second edition, 1987.
	
	\bibitem{KarStu66}
	Samuel Karlin and William~J. Studden.
	\newblock {\em Tchebycheff systems: {W}ith applications in analysis and
		statistics}.
	\newblock Pure and Applied Mathematics, Vol. XV. Interscience publishers John
	Wiley \& Sons, New York--London--Sydney, 1966.
	
	\bibitem{Kuc09}
	Marek Kuczma.
	\newblock {\em An {I}ntroduction to the {T}heory of {F}unctional {E}quations
		and {I}nequalities}.
	\newblock Birkh\"auser Verlag, Basel, second edition, 2009.
	\newblock Cauchy's equation and Jensen's inequality, Edited and with a preface
	by Attila Gil{\'a}nyi.
	
	\bibitem{Phe01}
	Robert~R. Phelps.
	\newblock {\em Lectures on {C}hoquet's theorem}, volume 1757 of {\em Lecture
		Notes in Mathematics}.
	\newblock Springer-Verlag, Berlin, second edition, 2001.
	
	\bibitem{RoyFit10}
	Halsey {Royden} and Patrick~M. {Fitzpatrick}.
	\newblock {\em {Real analysis. 4th ed.}}
	\newblock New York, NY: Prentice Hall, 4th ed. edition, 2010.
	
	\bibitem{Was10MIA}
	Szymon Wąsowicz.
	\newblock On some extremalities in the approximate integration.
	\newblock {\em Math. Inequal. Appl.}, 13(1):165--174, 2010.
	
\end{thebibliography}

\end{document}